\newcommand{\diam}{\operatorname{diam}}
\newcommand{\R}{\mathbb{R}}
\newcommand\Conv{\operatorname{Conv}}
\newcommand{\tir}[1]{\ensuremath{\overline {#1}}} 
\newtheorem{thm}{Theorem}[section]
\newtheorem{ass}[thm]{Assumption}
\newtheorem{defn}[thm]{Definition} 
\newtheorem{rem}[thm]{Remark}
\def\whsq{\vbox to 5.8pt 
{\offinterlineskip\hrule 
\hbox to 5.8pt{\vrule height 
5.1pt\hss\vrule height 5.1pt}\hrule}}
\def\<{\langle} 
\def\>{\rangle} 
\def\PP{{\mathop{{\rm I}\kern-.2em{\rm P}}\nolimits}} 
\def\FF{{\mathop{{\rm I}\kern-.2em{\rm F}}\nolimits}}   
\def\ZZ{{\mathop{{\rm I}\kern-.2em{\rm Z}}\nolimits}}
\title{Uniform limit of discrete convex functions }
\author{Gerard Awanou\thanks{Department of Mathematics, Statistics, and Computer Science, M/C 249.
University of Illinois at Chicago, 
Chicago, IL 60607-7045, USA ({\tt awanou@uic.edu}).}
        }
\begin{document}

\maketitle

\begin{abstract}
We consider mesh functions which are discrete convex in the sense that their central second order directional derivatives are positive. Analogous to the case of a uniformly bounded  sequence of convex functions, we prove that the uniform limit on compact subsets of discrete convex mesh functions which are uniformly bounded is a continuous convex function. Furthermore, if the discrete convex mesh functions
 interpolate boundary data of a continuous convex function and their Monge-Amp\`ere masses are uniformly bounded, the limit function satisfies the boundary condition strongly. The domain of the solution needs not be  uniformly convex. The result is applied to the convergence of some numerical methods for the Monge-Amp\`ere equation.

\end{abstract}

\begin{keywords}discrete convex functions, Monge-Amp\`ere, Aleksandrov maximum principle, uniform limit.\end{keywords}

\begin{AMS} 39A12, 35J60, 65N12, 65M06 \end{AMS}

\pagestyle{myheadings}
\thispagestyle{plain}
\markboth{G. AWANOU }{Uniform limit of discrete convex functions}

\section{Introduction}

Let $\Omega$ be a  bounded convex domain of $\R^d$ with boundary $\partial \Omega$ and let $g \in C(\partial \Omega)$. We assume that $g$ can be extended to a  convex function $\tilde{g} \in C(\tir{\Omega})$. In this paper we
prove that the uniform limit on compact subsets of  mesh functions which are discrete convex, c.f. Definition \ref{discrete-convex} below, and which interpolate $g$  on $\partial \Omega$, is a continuous convex function on $\tir{\Omega}$ which solves $v=g$ on $\partial \Omega$, provided their Monge-Amp\`ere masses, c.f. Definition \ref{mass} below, are uniformly bounded. The corresponding result for sequences of convex functions can be found in \cite[Lemma 5.1]{Hartenstine2006}.

The standard arguments for convergence of schemes to viscosity solutions of elliptic equations, which are based on consistency, stability and monotonicity, require the equation to satisfy a comparison principle for Dirichlet boundary condtions in the sense of viscosity \cite{NeilanSalgadoZhang2017,daniel2014quelques}. A comparison principle is only known to hold for the Monge-Amp\`ere equation when the Dirichlet boundary condition holds strongly. 
%only yield a limit solution which solves the boundary condition in the viscosity sense . 
Nethertheless convergence results have been proved, but it has been required that the domain be smooth and uniformly convex 
%for the boundary condition to be shown satisfied strongly 
\cite{Feng2017,jensen2017numerical,nochetto2018two}.  Here we do not assume that the domain is smooth, nor do we assume that it is strictly convex. As an application of our result, if it is known that the discrete solutions  are uniformly bounded, discrete convex and with Monge-Amp\`ere masses uniformly bounded, it can be shown that a
subsequence converges uniformly on compact subsets to a convex function continuous up to the boundary and which solves the boundary condition 
strongly. One then only needs to prove that the uniform limit obtained is a viscosity solution of the equation. This is done the usual way, based on the consistency and monotonicity of the discretization, by showing that the uniform limit is both a viscosity sub solution and a viscosity super solution of the differential equation. %If the right hand side of the equation is integrable, the Monge-Amp\`ere mass of the limit is finite and as we show it is then continuous up to the boundary. 
The uniform limit is then a viscosity solution of the Dirichlet problem and hence unique by the known comparison principle, c.f. Theorem \ref{comparison} below. Thus all subsequences converge to the same limit, proving the convergence of the discretization.

%Most numerical experiments for the Dirichlet problem for the Monge-Amp\`ere equation are on square domains, yet the available theory for convergence to viscosity solutions is limited to strictly convex domains.  
Our contribution offers a novel tool for the study of the convergence of some existing discretizations, e.g. \cite{Oberman2010a,Mirebeau14,Mirebeau15,nochetto2018two}. The main ingredients of our approach are the discrete Aleksandrov-Bakelman-Pucci's maximum principle
and the discrete maximum principle for the discrete Laplacian. %Thus our arguments extend to the two-scaled finite element method analyzed in \cite{nochetto2018two}, under assumptions on the mesh which guarantee that the above mentioned principle hold.
The assumption on the uniform bound on the Monge-Amp\`ere masses is easily checked when the right hand side is integrable for geometric methods such as \cite{Mirebeau14,Mirebeau15,DiscreteAlex2}. 
The axiomatic approach for convergence of finite difference schemes to the Aleksandrov solution of the Monge-Amp\`ere equation presented in \cite{AwanouAwi22} uses  the main result proved in this paper. We consider only uniform meshes in this paper.

%Possible applications of the study of the Dirichlet problem for the Monge-Amp\`ere equations are affine geometry problems. 
The analysis of numerical methods for the Monge-Amp\`ere equation is an active research area. The references \cite{Baginski96,Belgacem2006,GlowinskiICIAM07,Delzanno08,Bohmer2008,Feng2009,Chen2010b,Zheligovsky10,Brenner2010b,Lakkis11b,Sulman11,Davydov12,Glowinski2014,FengLewis14,Brix,neilan2019monge} cover most of the various approaches. 

The paper is organized as follows. In the next section we collect some notation used throughout the paper and prove 
in section \ref{result}
the theorem which asserts that the uniform limit on compact subsets of a sequence of uniformly bounded discrete convex functions interpolating the boundary condition is a continuous convex function which satisfies it, provided their Monge-Amp\`ere masses are uniformly bounded. In section \ref{cvg}, we review the notion of viscosity solution, monotone schemes and detail our proof of convergence strategy for monotone discretizations of the Monge-Amp\`ere equation. The proof that the uniform limit is convex is stated as a lemma in section \ref{result} and proved in the last section, as it relies on the notion of viscosity solution and monotone schemes which are reviewed in section  \ref{cvg}.

\section{Preliminaries}  \label{as}

We use the notation $|| . ||$ for the Euclidean norm of $\R^d$. Let $h$ be a small positive parameter and let
$$
\mathbb{Z}^d_h = \{\, m h, m \in \mathbb{Z}^d \, \},
$$
denote the orthogonal lattice with mesh length $h$. We define
\begin{equation} \label{interior-domain}
\Omega_h =  \Omega \cap \mathbb{Z}^d_h \ \text{and} \  \partial \Omega_h =  \partial \Omega \cap \mathbb{Z}^d_h.
\end{equation}
We denote by $\mathcal{U}_h$ the linear space of mesh functions, i.e. real-valued functions defined on 
$$
\tir{\Omega}_h := \Omega_h \cup \partial \Omega_h.
$$
For $x \in \Omega_h$, $e \in \mathbb{Z}^d, e \neq 0$ such that $x \pm h e \in \tir{\Omega}_h$ and $v_h \in \mathcal{U}_h$, let
$$
\Delta_e v_h (x) = \frac{v_h(x+ h e ) - 2 v_h(x) + v_h(x- h e ) }{h^2 ||e||^2}.
$$

\begin{defn} \label{discrete-convex}
We say that a mesh function $v_h$ is {\it discrete convex} if and only if $\Delta_e v_h(x) \geq 0$ for all $x \in \Omega_h$ and $e \in \mathbb{Z}^d, e \neq 0$ such that $x \pm h e \in \tir{\Omega}_h$.
\end{defn}

Let us denote by $\mathcal{C}_h$ the cone of discrete convex mesh functions. If we define for $x \in \Omega_h$
$$
\lambda_{1,h}[v_h](x) = \min_{e \in \mathbb{Z}^d } \Delta_e v_h(x),
$$
then $v_h \in \mathcal{C}_h$ if and only if $\lambda_{1,h}[v_h] \geq 0$.

The distance of $x \in \R^d$ to a set $K$ is denoted $d(x,K)$. We make the usual convention of denoting constants by $C$ but will occasionally index some constants. For a function $w \in C(\Omega)$ 
its restriction on $\Omega_h$ is also denoted $w$ by an abuse of notation. Same for the restriction to $\partial \Omega_h$ of an element of $C(\partial \Omega)$.

For a set $K \subset \tir{\Omega}$ we denote by $\Conv(K)$ its convex hull. By the convexity of $\Omega$, $\Conv(\tir{\Omega}_h) \subset \tir{\Omega}$. Let $\mathcal{T}_h$ be a triangulation of $\Conv(\tir{\Omega}_h)$ with vertices in $\tir{\Omega}_h$ and denote by $I (v_h)$ the piecewise linear continuous function which is equal to $v_h$ on the set of vertices of $\tir{\Omega}_h$. The interpolant $I(v_h)$ is not necessarily a convex function. We make the following assumption on 
$\mathcal{T}_h$.

\begin{ass} \label{ass}
For a mesh function $v_h$, the interpolant $I(v_h)$ is piecewise linear along the coordinate axes, i.e. the line segments in $\tir{\Omega}$ through $x \in \Omega_h$ and directions $e \in \{ \, e_1,\ldots,e_d\, \}$ the canonical basis of $\R^d$.
\end{ass}

\begin{defn} \label{uni-def}
Let $v_h \in \mathcal{U}_h$ for each $h >0$. We say that $v_h$ converges to a function $v$ on $\tir{\Omega}$ uniformly on compact subsets of $\Omega$ if and only if $I(v_h)$  converges uniformly on compact subsets of $\Omega$ to $v$.

\end{defn}

We note that for each compact set $K \subset \Omega$, there exists $h_K$ such that $K \subset \Conv(\tir{\Omega}_h)$ for $h\leq h_K$. Thus $I(u_h)$ is defined on each compact set for $h$ sufficiently small. In particular, since points in $\R^d$ are compact, the same holds for $I(u_h)(x), x \in \Omega$.

The purpose of the introduction of the interpolant for the definition of uniform convergence on compact subsets is to prove that a bounded sequence of discrete convex functions is  locally equicontinuous, c.f. Lemma \ref{loc-equi} below. It is immediate that if $v_h$  converges uniformly on compact subsets of $\Omega$ to $v$ in the sense of Definition \ref{uni-def}, then for each compact set $K \subset \Omega$, each sequence $h_k \to 0$ and for all $\epsilon >0$, there exists $h_{-1} >0$ such that for all $h_k$, $0< h_k < h_{-1}$, we have
$$
\max_{x \in K \cap \mathbb{Z}_{h_k}^d} |v_{h_k}(x) - v(x)| < \epsilon.
$$

\subsection{Discrete Aleksandrov-Bakelman-Pucci's maximum principle}
We recall the results from \cite{awanou2019uweakcvg} on discrete versions of the notions of subdifferential. For $x \in \Omega$ we denote by $d(x,\partial \Omega)$ the distance of $x$ to $\partial \Omega$. For a subset $S$ of $\Omega$, $\diam(S)$ denotes its diameter.

For $v_h \in \mathcal{C}_h$ and $x_0 \in \Omega_h$, we define  
$$
\partial_h v_h (x_0) = \{ \, p \in \R^d: v_h(x) \geq v_h(x_0) + p \cdot (x-x_0), \, \text{for all} \, x \in \tir{\Omega}_h\,\},
$$
and
\begin{equation*}
M_h[v_h] (E) = | \partial_h v_h(E)  | \text{ for a Borel set } E.
\end{equation*}
It is shown in \cite{awanou2019uweakcvg} that the set function $M_h[v_h]$ defines a Borel measure for a discrete convex mesh function $v_h$ with the property 
$$
M_h[v_h] (\Omega) = \sum_{x \in \Omega_h} M_h[v_h] (\{ \, x \, \}). 
$$
We will need the following maximum principle.
\begin{lemma} \label{d-stab}
Let $v_h \in \mathcal{C}_h$ such that $v_h \geq 0$ on $\partial \Omega_h$. Then for 
$x \in \Omega_h$ 
$$
 v_h(x) \geq - C(d) \bigg[ \diam(\Omega)^{d-1} d(x,\partial \Omega) M_h[v_h](\Omega)\bigg]^{\frac{1}{d}},
$$
for a positive constant $C(d)$ which depends only on $d$. 
\end{lemma}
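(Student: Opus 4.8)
The plan is to mirror the geometric proof of the continuous Aleksandrov--Bakelman--Pucci estimate, with the continuous subdifferential replaced by $\partial_h v_h$ and the Lebesgue measure of its image controlled by $M_h[v_h](\Omega)$. Fix $x_0 = x \in \Omega_h$. If $v_h(x_0)\ge 0$ there is nothing to prove, since the right-hand side is nonpositive; so assume $v_h(x_0)<0$ and set $m=-v_h(x_0)>0$ and $d_0=d(x_0,\partial\Omega)$. The goal will be to produce a large set of slopes that are realized as discrete subgradients at interior nodes, and then to bound its volume from below in an \emph{anisotropic} way, so that the factor $d(x,\partial\Omega)$ appears.

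First I would introduce the admissible set of slopes
$$
S=\{\, p\in\R^d : p\cdot(y-x_0)<m \ \text{for all}\ y\in\partial\Omega_h\,\}.
$$
For $p\in S$, minimize $x\mapsto v_h(x)-p\cdot x$ over $x\in\tir{\Omega}_h$; a minimizer $\hat x$ satisfies $p\in\partial_h v_h(\hat x)$ by the very definition of $\partial_h v_h$. Using $v_h\ge 0$ on $\partial\Omega_h$ together with the defining inequality of $S$, one checks that $v_h(y)-p\cdot y > v_h(x_0)-p\cdot x_0$ for every boundary node $y$, so the minimizer cannot lie on $\partial\Omega_h$; hence $\hat x\in\Omega_h$. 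This shows $S\subseteq\bigcup_{z\in\Omega_h}\partial_h v_h(z)=\partial_h v_h(\Omega)$ and therefore $|S|\le M_h[v_h](\Omega)$. Note that only $v_h(x_0)=-m$ and the sign of $v_h$ on the boundary are used, so $x_0$ need not be a minimizer and the argument applies verbatim to an arbitrary interior node.

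The crux is the lower bound $|S|\ge c(d)\,m^d/(\diam(\Omega)^{d-1}d_0)$. Let $z_0\in\partial\Omega$ be a nearest boundary point to $x_0$ and $\nu=(x_0-z_0)/d_0$ the associated inward unit vector. By convexity of $\Omega$ the supporting hyperplane at $z_0$ has outer normal $-\nu$, whence $(y-x_0)\cdot(-\nu)\le d_0$ for every $y\in\tir{\Omega}$; this makes the spike $p^\ast=(m/d_0)(-\nu)$ admissible, $p^\ast\in S$. On the other hand, every $q\perp\nu$ with $|q|\le m/\diam(\Omega)$ lies in $S$, since $|q\cdot(y-x_0)|\le |q|\,\diam(\Omega)\le m$ (the harmless difference between $\le m$ and the strict inequality defining $S$ is confined to a set of measure zero and does not affect the volume estimate). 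As $S$ is convex, it contains the cone spanned by $p^\ast$ and the transverse $(d-1)$-dimensional ball of radius $m/\diam(\Omega)$, whose volume is of order $\tfrac1d\,\omega_{d-1}(m/\diam(\Omega))^{d-1}(m/d_0)$. Combining $|S|\le M_h[v_h](\Omega)$ with this lower bound gives $m^d\le C(d)\,\diam(\Omega)^{d-1}d_0\,M_h[v_h](\Omega)$, which is the assertion.

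The main obstacle is exactly this final geometric volume estimate: obtaining the sharp factor $\diam(\Omega)^{d-1}d_0$, rather than the crude $\diam(\Omega)^d$ a single inscribed ball would give, requires the anisotropic cone construction anchored at the nearest boundary point together with the convexity of $S$. One must also keep track that the constraints defining $S$ involve only the discrete boundary $\partial\Omega_h\subset\partial\Omega$, so that $S$ is only larger than its continuous analogue; the admissibility checks for $p^\ast$ and for the transverse ball use the full convex geometry of $\tir{\Omega}$ and therefore remain valid a fortiori against the discrete constraints.
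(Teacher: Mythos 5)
The paper does not actually prove this lemma: it is recalled from \cite{awanou2019uweakcvg}, where it is established by precisely the discrete adaptation of the classical Aleksandrov--Bakelman--Pucci argument that you give. Your proof is correct and self-contained --- the inclusion $S\subseteq\partial_h v_h(\Omega)$ obtained by minimizing $v_h(x)-p\cdot x$ over the finite set $\tir{\Omega}_h$ and checking that the minimizer is interior, and the anisotropic cone bound $|S|\ge \frac{1}{d}\,\omega_{d-1}\,m^d/\bigl(\diam(\Omega)^{d-1}d(x,\partial\Omega)\bigr)$ via the supporting hyperplane at the nearest boundary point, are both sound and together yield the stated estimate.
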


\begin{defn} \label{mass}
We refer to $M_h[v_h](\Omega)$ as  Monge-Amp\`ere mass of the discrete convex function $v_h$.
\end{defn}

\section{Dirichlet boundary condition for a uniform limit} \label{result}

The main result of this paper is proved in this section. We start with some lemmas.

\begin{lemma} \label{viscosity-sense}
Let $v_h \in \mathcal{C}_h$ denote a sequence of discrete convex functions which converges uniformly on compact subsets to a finite function $v$. Then the function $v$ is convex.
\end{lemma}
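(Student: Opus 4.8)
The plan is to prove convexity of $v$ directly from the discrete midpoint inequality encoded in Definition \ref{discrete-convex}, without invoking viscosity solutions or monotone schemes. First I would record that $v$ is continuous on $\Omega$: each interpolant $I(v_h)$ is continuous, and by Definition \ref{uni-def} the convergence $I(v_h) \to v$ is uniform on every compact subset of $\Omega$, so $v$ is continuous there. It therefore suffices, by the classical fact that a continuous midpoint-convex function on a convex set is convex, to establish
$$
v\Big(\frac{x+y}{2}\Big) \;\le\; \frac{1}{2}\big(v(x) + v(y)\big) \qquad \text{for all } x,y \in \Omega .
$$

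The heart of the argument is to make the three points $x,\,y,\,m:=(x+y)/2$ accessible to Definition \ref{discrete-convex}. The inequality $\Delta_e v_h(x_0) \ge 0$ is precisely $v_h(x_0) \le \tfrac12\big(v_h(x_0+he) + v_h(x_0-he)\big)$; writing $a = x_0-he$ and $b = x_0+he$, this is midpoint convexity for pairs of lattice points $a,b$ whose half-sum $x_0=(a+b)/2$ is again a lattice point, i.e. whenever $b-a \in 2h\mathbb{Z}^d$. So, given $x,y$, I would take $x_h \in h\mathbb{Z}^d$ nearest to $x$, choose $k \in \mathbb{Z}^d$ nearest to $(y-x_h)/(2h)$, and set $y_h = x_h + 2hk$ and $m_h = x_h + hk = (x_h+y_h)/2 \in h\mathbb{Z}^d$. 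Then $x_h \to x$, $y_h \to y$, and $m_h \to m$ as $h \to 0$, and for $h$ small all three points lie in $\Omega$; hence $m_h \in \Omega_h$ and $x_h = m_h - h k,\ y_h = m_h + h k \in \tir{\Omega}_h$, so Definition \ref{discrete-convex} applies at $m_h$ with $e=k$ and gives $v_h(m_h) \le \tfrac12\big(v_h(x_h) + v_h(y_h)\big)$. Note only the three points are needed, so the convexity of $\Omega$ is used merely to place the endpoints inside the domain.

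Finally I would pass to the limit, the subtlety being that the evaluation points $x_h,y_h,m_h$ move with $h$. Fixing a compact neighborhood $K$ of $m$ in $\Omega$, I would split $|v_h(m_h) - v(m)| \le |I(v_h)(m_h) - v(m_h)| + |v(m_h) - v(m)|$, where $v_h(m_h)=I(v_h)(m_h)$ since $m_h$ is a vertex; the first term tends to $0$ by uniform convergence on $K$ and the second by continuity of $v$. Arguing identically for $x_h$ and $y_h$ and passing to the limit in the discrete inequality yields the midpoint inequality above, and convexity follows.

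I expect the main obstacle to be the rounding and parity bookkeeping in the second step: one must simultaneously arrange that $y_h - x_h \in 2h\mathbb{Z}^d$ (so the midpoint is a lattice point and the inequality is a genuine instance of Definition \ref{discrete-convex}), that all three points fall in the correct lattice sets for $h$ small, and that the moving evaluation points converge. This last point is exactly why uniform convergence on compact subsets, rather than mere pointwise convergence, is indispensable.
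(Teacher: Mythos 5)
Your proof is correct, but it takes a genuinely different and more elementary route than the paper's. The paper proves that $v$ is convex \emph{in the viscosity sense}: it fixes $\phi\in C^2(\Omega)$ touching $v$ from above at a strict local maximum $x_0$, selects near-maximizers $x_l'$ of $v_{h_l}-\phi$ over a ball, and uses the monotonicity and consistency of the discrete operator $\lambda_{1,h}[v_h](x)=\min_{e\in\mathbb{Z}^d}\Delta_e v_h(x)$ to conclude $\lambda_1[\phi](x_0)\ge 0$; it then invokes the nontrivial fact that an upper semi-continuous function convex in the viscosity sense is convex, citing \cite[Proposition 4.1]{Lindqvist00}. You instead read $\Delta_e v_h\ge 0$ literally as midpoint convexity on lattice triples of the correct parity, approximate an arbitrary pair $x,y\in\Omega$ and its midpoint $m$ by such triples $x_h=m_h-hk$, $y_h=m_h+hk$, and pass to the limit using local uniform convergence of $I(v_h)$ together with continuity of $v$, concluding by the classical fact that a continuous midpoint-convex function on a convex set is convex. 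Your bookkeeping is sound: choosing $k$ nearest to $(y-x_h)/(2h)$ gives $y_h\to y$ and $m_h\to m$, all three points lie in the open set $\Omega$ for $h$ small, $k\neq 0$ for $h$ small when $x\neq y$ (the case $x=y$ being trivial), and $v_h=I(v_h)$ at lattice vertices, so the limit passage through a compact neighborhood is justified. What each approach buys: yours is self-contained and avoids both the viscosity machinery and the consistency of $\lambda_{1,h}$ with the smallest eigenvalue of the Hessian; the paper's version is the template for the Barles--Souganidis-type arguments deployed in section \ref{cvg} and adapts to other degenerate elliptic operators. Note that both arguments genuinely use directions $e=k\in\mathbb{Z}^d$ of unbounded length as $h\to 0$ (yours to join distant lattice points, the paper's to make $\lambda_{1,h}$ consistent with $\lambda_1$), so neither in fact gets by with discrete convexity along the coordinate axes alone.
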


The proof of the above lemma is given in section \ref{proofs}.

The next lemma says that the extensions of 
bounded discrete convex functions are locally equicontinuous. For another example of a notion of discrete convexity for which equicontinuity holds, see \cite{Aguilera2008}. The continuous analogue of the next lemma can be found in \cite[Lemma 1.1.6]{Guti'errez2001}.

\begin{lemma} \label{loc-equi}
Assume that $v_h \in \mathcal{C}_h$ is  bounded. Then the family $I(v_h)$ is locally equicontinuous, i.e. for each compact subset $K \subset \Omega$,
there exist $h_K, C_K >0$ such that
$$
|I(v_h)(x) - I(v_h)(y)| \leq C_K ||x-y||, \, \forall x,y \in K, % \cap \mathbb{Z}^d_h.
$$
and $h \leq h_K$.
\end{lemma}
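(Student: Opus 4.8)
The plan is to reduce the $d$-dimensional Lipschitz estimate to a one-dimensional convexity bound along the coordinate axes, which is exactly what Assumption \ref{ass} makes available. Fix a compact set $K \subset \Omega$, put $d_0 = d(K,\partial\Omega) > 0$, and let $M$ be the uniform bound, so $|v_h| \le M$ on $\tir{\Omega}_h$ for all $h$. I would first choose $h_K$ so small that, for $h \le h_K$, every simplex of $\mathcal{T}_h$ that meets $K$ has all its vertices at distance at least $d_0/2$ from $\partial \Omega$; this is possible because such simplices have diameter $O(h)$ and lie within $O(h)$ of $K$.

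The central step is a one-dimensional estimate. Fix a coordinate direction $e_i$ and a lattice point $x$ lying on a simplex that meets $K$. Restricting $v_h$ to the lattice points of the chord $\Omega \cap (x + \R e_i)$ produces a one-dimensional mesh function whose second central differences $\Delta_{e_i} v_h$ are nonnegative by discrete convexity; hence its piecewise linear interpolant along that chord is a convex function of one variable, bounded by $M$. Since $x$ is at distance at least $d_0/2$ from the two endpoints of the chord (which lie on $\partial\Omega$), the standard secant estimate for convex functions — comparing the one-sided slope at $x$ with the secant slopes to the two endpoints — bounds the difference quotient by
$$
\left| \frac{v_h(x+h e_i) - v_h(x)}{h} \right| \le \frac{4M}{d_0}.
$$
I would state and prove this bound as the key quantitative input.

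The second step converts these coordinate difference quotients into a bound on the gradient of $I(v_h)$ on each simplex. On a simplex $T$ meeting $K$ the interpolant is affine with constant gradient $g_T$. By Assumption \ref{ass} the interpolant is linear along every coordinate segment $[y,y+he_i]$ between adjacent lattice points, so each such segment is an edge of $\mathcal{T}_h$ and the directional derivative $g_T \cdot e_i$ equals the coordinate difference quotient $(v_h(y+he_i)-v_h(y))/h$ for the appropriate lattice point $y$ of $T$. Applying the one-dimensional bound in each of the $d$ coordinates gives $|g_T \cdot e_i| \le 4M/d_0$ for every $i$; since $\{e_1,\ldots,e_d\}$ is the canonical basis, this yields $\enorm{g_T} \le 4\sqrt{d}\,M/d_0 =: C_K$. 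Because $I(v_h)$ is continuous and piecewise affine with $\enorm{\nabla I(v_h)} \le C_K$ on every simplex meeting $K$, for $x,y \in K$ the segment joining them crosses finitely many such simplices and summing the affine increments gives $|I(v_h)(x) - I(v_h)(y)| \le C_K \norm{x-y}$, the asserted estimate.

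I expect the main obstacle to be the second step: making rigorous the identification of $g_T \cdot e_i$ with a one-dimensional difference quotient. This rests entirely on the compatibility built into Assumption \ref{ass}, which forces the coordinate segments to be edges of the triangulation; checking that this assumption is realized by a genuine triangulation (for instance the Kuhn--Freudenthal triangulation of the cubic lattice) and that the relevant lattice points $y$ remain at distance at least $d_0/2$ from $\partial\Omega$ is where the geometric bookkeeping concentrates. The remaining ingredients — the one-dimensional secant bound and the passage from a uniform gradient bound to a Lipschitz bound — are routine.
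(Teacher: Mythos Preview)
Your proposal is correct and shares with the paper the essential reduction: use Assumption~\ref{ass} to see that $I(v_h)$ restricted to any coordinate line through a lattice point is a one-dimensional piecewise linear \emph{convex} function, and then invoke the standard secant bound for bounded convex functions to control slopes in terms of $M$ and the distance to $\partial\Omega$. The difference lies in the final assembly. You package the result as a uniform bound on the constant gradient $g_T$ of $I(v_h)$ on each simplex meeting $K$, by identifying each component $g_T\cdot e_i$ with a first-order difference quotient at a vertex of $T$; this requires the extra geometric input (which you flag) that every coordinate segment $[y,y+he_i]$ is an edge of $\mathcal T_h$, true for the Kuhn--Freudenthal triangulation but not immediate from the bare statement of Assumption~\ref{ass}. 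The paper instead first proves the coordinate-direction Lipschitz estimate \eqref{lip-dir} at lattice points and then extends it to arbitrary $x\in K$ by writing $x$ as a convex combination of the vertices of the simplex containing it; a function Lipschitz in each coordinate direction is then Lipschitz. Your route is more quantitative (you obtain $C_K=4\sqrt{d}\,M/d_0$ explicitly) and avoids the paper's extension step, whose identity $I(v_h)(x+rhe)=\sum_i\alpha_i I(v_h)(x_i+rhe)$ tacitly requires $I(v_h)$ to be affine on the translated simplex $T+rhe$ and so needs its own justification. In short, both arguments rest on the same one-dimensional convexity lemma; the paper trades your edge-identification hypothesis for a translation-invariance claim, and each route has one geometric fact about $\mathcal T_h$ that must be checked.
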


\begin{proof} Recall that $I(v_h)$ is not necessarily a convex function. %We show that $I(v_h)$ is Lipschitz continuous in each coordinate direction of $\tir{\Omega}_h$. 
Let $(e_1, \ldots, e_d)$ denote the canonical basis of $\R^d$. For 
 $e \in \{ \,e_1, \ldots, e_d \, \}$, $x \in \tir{\Omega}_h$, $s \in \R$ such that $x+ s h e \in \tir{\Omega}_h$ we first show that for all $t \in [0,1]$
\begin{equation} \label{lin-conv}
I(v_h) (t x + (1-t) (x+ s h e)) \leq t I(v_h)(x) + (1-t) I(v_h)(x+ s h e).
\end{equation}
Since $v_h$ is discrete convex, we have $\Delta_e v_h (x) \geq 0$ for all $x \in \Omega_h$. Thus
$$
\frac{v_h(x+ h e ) - v_h(x)}{h}  \geq  \frac{v_h(x) - v_h(x- h e ) }{h}.
$$
The linear extension of $v_h$ on the segments connecting $x$ to $x+ h e$, $x \in \tir{\Omega}_h$,  which is thus convex, coincides with $I(v_h)$, by our choice of the interpolant $I(v_h)$, c.f. Assumption \ref{ass}. We conclude that \eqref{lin-conv} holds. % for $x \in \Omega_h$.

Let $K$ be a compact subset of $\Omega$. We claim that for $r, s \in \R$ such that $x+r h e, x+ s h e \in K$ for some $x \in 
K \cap \Omega_h$, we have
\begin{equation} \label{lip-dir}
|I(v_h)(x+r h e) - I(v_h)(x+s  h e)| \leq C h |r-s|,
\end{equation} 
for a constant $C$ which depends on $K$ and $e_i$. The proof is similar to the one for \cite[Lemma 1.1.6]{Guti'errez2001}.

We denote by $\partial_e I(v_h)(y)$ the subdifferential of the restriction of $I(v_h)$ to the line segment through $x$ and direction $e$, i.e. $\partial_e I(v_h)(x+r h e) = \{ \, p \in \R, I(v_h)(x+r h e) \geq I(v_h)(x+t h e) + p h(r-t), t \in \R,  x + t he \in \Omega \, \}$. We then let $C= \sup \{ \, |p|, p \in \partial_e I(v_h)(K) \, \}$. We have $C \leq \max_{x \in K} |v_h(x+ h e ) - v_h(x)|/h $ and we recall that $||e||=1$. Moreover
$I(v_h)(x+r h e) \geq I(v_h)(x+s  h e) + p h (r-s)$ for $p \in \partial_e I(v_h)(x+s h e)$. Reversing the roles of $x+r h e$ and $x+t h e$ we obtain \eqref{lip-dir}.

We now prove \eqref{lip-dir} when $x \in K \cap (\Conv(\tir{\Omega}_h) \setminus \partial \Omega)$. 
There exists $h_K$ such that for $h \leq h_K$,  $K \subset \cup \{ \, T \in \mathcal{T}_h, T \cap \partial \Omega = \emptyset\, \}$. Thus for $x \in K$, we can find $\alpha_i, i=1,\ldots,d,  0 \leq \alpha_i \leq 1$ with $\sum_{i=1}^d \alpha_i =1$ such that
$x=\sum_{i=1}^d \alpha_i x_i$ and $I(v_h)$ is linear on the simplex $T$ with vertices $x_1, \ldots, x_d$ containing $x$. Moreover
$T \cap \partial \Omega = \emptyset$. We have
\begin{align*}
I(v_h)(x)  = \sum_{i=1}^d \alpha_i I(v_h)(x_i),
\end{align*}
and since $\sum_{i=1}^d \alpha_i =1$
\begin{align*}
I(v_h)(x+ r h e)  =\sum_{i=1}^d \alpha_i I(v_h)(x_i+ r h e) \text{ and }
I(v_h)(x+ s h e)  =\sum_{i=1}^d \alpha_i I(v_h)(x_i+ s h e).
\end{align*}
We thus have
$$
|I(v_h)(x+r h e_i) - I(v_h)(x+s h e_i)| \leq \sum_{i=1}^d \alpha_i |I(v_h)(x_i+r h e) - I(v_h)(x_i+s h e)| \leq C h |r-s|.
$$
As a consequence, $I(v_h)$ is Lipschitz continuous in each coordinate direction on compact subsets of $\Omega$. We conclude that $I(v_h)$ is locally Lipschitz continuous, hence locally equicontinuous.
\end{proof}

\begin{rem}
The proof of Lemma \ref{viscosity-sense} only exploits discrete convexity along the coordinate axes.
\end{rem}

%The existence of a solution to \eqref{m1h} is proved in Section \ref{existence-mono}. 
Recall the discrete Laplacian
$$
\Delta_h v_h(x) =  \sum_{i=1}^d  \Delta_{e_i} v_h(x).
$$
%where $(e_1, e_2)$ denote the canonical basis of $\R^2$.

For $x \in \tir{\Omega}$ define
\begin{equation} \label{bd-data}
U(x) = \sup \{ \, L(x), L \leq g \text{ on } \partial \Omega, L \text{ affine} \, \},
\end{equation}
the convex envelope with boundary data $g$. We have \cite[Theorem 5.2]{Hartenstine2006}

\begin{thm} \label{L-bd}
The function $g$ can be extended to a  convex function $\tilde{g} \in C(\tir{\Omega})$ if and only if the function $U$ defined by
\eqref{bd-data} is in $C(\tir{\Omega})$ and $U=g$ on $ \partial \Omega$.
\end{thm}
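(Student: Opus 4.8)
The plan is to prove the two implications separately, treating the forward direction as the substantive one.

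For the reverse implication, assume $U \in C(\tir{\Omega})$ with $U = g$ on $\partial\Omega$. Since $U$ is by \eqref{bd-data} a pointwise supremum of affine functions it is convex, and being continuous on $\tir{\Omega}$ it is exactly the required extension; thus one simply takes $\tilde{g} = U$. Before anything else I would record the basic structural facts about $U$ used throughout: each competitor $L$ in \eqref{bd-data} satisfies $L \le \max_{\partial\Omega} g$ on $\tir{\Omega}$ (an affine function attains its maximum over a compact convex set at an extreme point, which lies on the boundary), so $U$ is finite; $U$ is convex and lower semicontinuous as a supremum of affine functions; and $U \le g$ on $\partial\Omega$ directly from the definition. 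In particular $U$ is automatically continuous on the open set $\Omega$.

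For the forward implication, suppose $g = \tilde{g}|_{\partial\Omega}$ with $\tilde{g} \in C(\tir{\Omega})$ convex. The core step is to show $U = g$ on $\partial\Omega$; only ``$\ge$'' requires work. Fix $x_0 \in \partial\Omega$ and an interior point $p \in \Omega$, and set $y_s = x_0 + s(p - x_0)$ for $s \in (0,1]$, so $y_s \in \Omega$ and $y_s \to x_0$. Since $\tilde{g}$ is finite and convex it admits a supporting affine function $L_s$ at the interior point $y_s$, i.e.\ $L_s \le \tilde{g}$ on $\tir{\Omega}$ with $L_s(y_s) = \tilde{g}(y_s)$; in particular $L_s \le \tilde{g} = g$ on $\partial\Omega$, so each $L_s$ is admissible in \eqref{bd-data} and $U(x_0) \ge L_s(x_0)$. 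Writing $L_s(x_0) = \tilde{g}(y_s) - s\,\nabla L_s \cdot (p - x_0)$ and using the subgradient inequality of $L_s$ at $p$ to bound $\nabla L_s \cdot (p - x_0)$ from above by $(\tilde{g}(p) - \tilde{g}(y_s))/(1-s)$, one gets $L_s(x_0) \to g(x_0)$ as $s \to 0$ (the upper bound $L_s(x_0) \le \tilde{g}(x_0) = g(x_0)$ being immediate), whence $U(x_0) \ge g(x_0)$. Combined with $U \le g$ this gives $U = g$ on $\partial\Omega$. I expect this estimate --- extracting affine minorants of $g$ whose values at $x_0$ converge to $g(x_0)$ while their slopes may blow up --- to be the main obstacle, and it is precisely where the hypothesis that $g$ extends to a convex function is used.

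It remains to upgrade continuity up to $\partial\Omega$. Lower semicontinuity of $U$ and $U(x_0) = g(x_0)$ already yield $\liminf_{x\to x_0} U(x) \ge g(x_0)$, so only upper semicontinuity at boundary points is left. Here I would use a purely convex-geometric argument that does not require strict convexity of the domain: for $x_k \to x_0$ in $\Omega$, let $z_k \in \partial\Omega$ be the exit point of the ray from the fixed interior point $p$ through $x_k$, so that $x_k = (1-\lambda_k)p + \lambda_k z_k$ with $\lambda_k \in [0,1]$. Continuity of the radial function of the convex body $\tir{\Omega}$ forces $z_k \to x_0$ and $\lambda_k \to 1$, and then convexity of $U$ together with $U = g$ on $\partial\Omega$ gives
$$ U(x_k) \le (1-\lambda_k)\,U(p) + \lambda_k\, g(z_k) \longrightarrow g(x_0), $$
using continuity of $g$ at $x_0$. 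Hence $\limsup_{x\to x_0} U(x) \le g(x_0)$, so $U$ is continuous on $\tir{\Omega}$, completing the forward implication. The only remaining routine verifications are the continuity of the radial function and the single subgradient estimate, which I would not belabor.
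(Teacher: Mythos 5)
Your proof is correct. Be aware, though, that the paper contains no proof of this statement to compare against: Theorem \ref{L-bd} is quoted directly from \cite[Theorem 5.2]{Hartenstine2006} and used as an imported result. Your self-contained argument follows the classical convex-analysis route (which is essentially what Hartenstine does): the reverse implication is the observation that $U$, being a supremum of affine functions, is itself the required convex extension; the forward implication recovers $U=g$ on $\partial \Omega$ by taking supporting affine functions of $\tilde g$ at interior points $y_s \to x_0$ and controlling $L_s(x_0)$ via the subgradient inequality at a fixed interior point $p$, and then upgrades to continuity on $\tir{\Omega}$ by combining lower semicontinuity of the supremum with an upper bound coming from convexity of $U$ along the segment from $p$ to the radial exit point $z_k$. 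All the key estimates check out. Two points you pass over quickly but should record explicitly: (i) in the upper-semicontinuity step you only treat sequences $x_k \to x_0$ with $x_k \in \Omega$; sequences approaching $x_0$ along $\partial \Omega$ are handled trivially by $U=g$ on $\partial \Omega$ and continuity of $g$, so nothing is lost; (ii) the inequality $U(x_k) \leq (1-\lambda_k)U(p) + \lambda_k U(z_k)$ requires $U$ to be convex on all of $\tir{\Omega}$, including boundary points, which does hold because $U$ is defined by \eqref{bd-data} as a supremum of affine functions on $\tir{\Omega}$. Neither is a gap, and the continuity of the radial function of a bounded convex body with nonempty interior, which you invoke to get $z_k \to x_0$ and $\lambda_k \to 1$, is indeed standard and does not require strict convexity of $\Omega$.
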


We can now state the main result of this paper.

\begin{thm} \label{main2}
Let $v_h \in \mathcal{C}_h$ be uniformly bounded with a uniform bound on their Monge-Amp\`ere masses, and such that $v_h=g$ on $\partial \Omega_h$. 
Assume that $g$ can be extended to a  convex function $\tilde{g} \in C(\tir{\Omega})$. Then, there is a subsequence $v_{h_k}$ which converges uniformly on compact subsets of $\Omega$ to a convex function $v$ in $C(\tir{\Omega})$  which solves $v=g$ on $\partial \Omega$. 
\end{thm}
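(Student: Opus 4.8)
The plan is to assemble four ingredients: compactness for the extraction of the subsequence, the convexity lemma for the limit, the discrete Aleksandrov--Bakelman--Pucci estimate for the boundary inequality from below, and the discrete maximum principle for the Laplacian for the boundary inequality from above.

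First I would extract the subsequence and identify the limit. Since $v_h \in \mathcal{C}_h$ is uniformly bounded, Lemma \ref{loc-equi} shows that the interpolants $I(v_h)$ are locally equicontinuous on $\Omega$, and they inherit the uniform bound there. An Arzel\`a--Ascoli argument, diagonalized over an exhaustion of $\Omega$ by compact sets, then produces a subsequence $v_{h_k}$ for which $I(v_{h_k})$ converges uniformly on compact subsets of $\Omega$ to a finite function $v$; by Definition \ref{uni-def} this is exactly the asserted convergence $v_{h_k}\to v$. The uniform bound passes to the limit, so $v$ is bounded, and by Lemma \ref{viscosity-sense} the limit $v$ is convex on $\Omega$, hence continuous in the interior.

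Next I would prove the boundary inequality from below. Let $L$ be affine with $L\le g$ on $\partial\Omega$. Then $v_h - L$ is discrete convex (subtracting an affine function does not change $\Delta_e$), satisfies $v_h-L\ge 0$ on $\partial\Omega_h$, and has the same Monge-Amp\`ere mass as $v_h$ because its discrete subdifferential is merely a translate. Applying Lemma \ref{d-stab} to $v_h-L$ and letting $M$ be the uniform bound on the masses gives, for $x\in\Omega_h$,
\[
v_h(x)\ge L(x)-C(d)\big[\diam(\Omega)^{d-1}\,d(x,\partial\Omega)\,M\big]^{1/d}.
\]
Passing to the limit along the subsequence and then taking the supremum over all such $L$ yields $v(x)\ge U(x)-C(d)[\diam(\Omega)^{d-1}d(x,\partial\Omega)M]^{1/d}$, where $U$ is the convex envelope of the boundary data from \eqref{bd-data}. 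Since $d(x,\partial\Omega)\to 0$ as $x\to x_0\in\partial\Omega$, and since by Theorem \ref{L-bd} we have $U\in C(\tir{\Omega})$ with $U=g$ on $\partial\Omega$, I conclude $\liminf_{x\to x_0}v(x)\ge g(x_0)$. I emphasize that this step uses only the estimate of Lemma \ref{d-stab} and in particular no strict or uniform convexity of $\partial\Omega$.

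The matching inequality from above is where the discrete Laplacian enters, and it is the step I expect to be the main obstacle. Because each $\Delta_{e_i}v_h\ge 0$, we have $\Delta_h v_h=\sum_i\Delta_{e_i}v_h\ge 0$, i.e.\ $v_h$ is discrete subharmonic. An affine barrier built from a supporting hyperplane at $x_0$ would suffice at strictly convex boundary points but fails along flat faces, which is precisely the difficulty created by not assuming uniform convexity; so instead I would compare $v_h$ with the discrete harmonic function $\phi_h$ solving $\Delta_h\phi_h=0$ in $\Omega_h$ with $\phi_h=g$ on $\partial\Omega_h$. The discrete maximum principle for $\Delta_h$ gives $v_h\le\phi_h$, and the $\phi_h$ converge to the harmonic extension $H$ of $g$, which is continuous up to $\partial\Omega$ with $H=g$ there because a bounded convex domain is Lipschitz and hence regular for the Laplacian. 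Passing to the limit gives $v\le H$ on $\Omega$, whence $\limsup_{x\to x_0}v(x)\le H(x_0)=g(x_0)$. Combining the two one-sided bounds gives $\lim_{x\to x_0}v(x)=g(x_0)$ for every $x_0\in\partial\Omega$, so $v$ extends continuously to $\tir{\Omega}$ with $v=g$ on $\partial\Omega$, and together with interior continuity this yields $v\in C(\tir{\Omega})$. The points I expect to demand the most care are the behaviour of the stencil of $\Delta_h$ at mesh points adjacent to $\partial\Omega$, where the axis neighbours may leave $\tir{\Omega}_h$, which must be controlled for both the discrete maximum principle and the convergence $\phi_h\to H$ to be rigorous, along with the passage of the pointwise mesh inequalities to the interpolant $I(v_{h_k})$ at general interior points.
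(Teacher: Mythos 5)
Your proposal is correct and follows essentially the same route as the paper: Arzel\`a--Ascoli via Lemma \ref{loc-equi}, convexity via Lemma \ref{viscosity-sense}, the lower boundary bound via Lemma \ref{d-stab} applied to $v_h-L$ together with Theorem \ref{L-bd}, and the upper bound via comparison with the discrete harmonic function and the results of \cite{del2018convergence} on Lipschitz domains. The only cosmetic difference is that you take the supremum over all affine minorants to compare with the envelope $U$ at once, whereas the paper fixes, for each boundary point $\zeta$ and each $\epsilon>0$, a single affine $L\le g$ with $L(\zeta)\ge g(\zeta)-\epsilon$; both rest on the same ingredients.
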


\begin{proof} Since the family $v_h$ is uniformly bounded on
$\Omega_h$ we obtain by Lemma \ref{loc-equi} that $I(v_h)$ is locally equicontinuous.
By the Arzela-Ascoli theorem, there exists a subsequence $I(v_{h_k})$ which converges uniformly on compact subsets to a function 
$v$. %In particular, since points in $\R^d$ are compact, the convergence is also pointwise on $\Omega$.
More precisely, the subsequence $I(v_{h_k})$ is shown to be pointwise convergent on a dense subset of $\Omega$ with the convergence uniform on compact subsets. In particular, since points in $\R^d$ are compact, the convergence is also pointwise on $\Omega$.

Since $v_h \in \mathcal{C}_h$ the function $v$ is convex by Lemma \ref{viscosity-sense}. By the stability property, the function $v$ is locally bounded and hence continuous on $\Omega$. 
%Since $v_h=g$ on $\partial \Omega_h$ we get $v=g$ on $\partial \Omega$.

We now show that $v$ has a continuous extension to $\tir{\Omega}$, which we also denote by $v$ by an abuse of notation, and such that $v=g$ on $\partial \Omega$.

%To prove that $v$ is continuous up to the boundary, 
We first prove that for $\zeta \in \partial \Omega$, $\lim_{x \to \zeta} v(x) \geq g(\zeta)$ by arguing as in the proof of \cite[Lemma 5.1]{Hartenstine2006}. Let $\epsilon >0$. By Theorem \ref{L-bd} there exists an affine function $L$ such that $L \leq g$ on $\partial \Omega$ and $L(\zeta) \geq g(\zeta) - \epsilon$. Put $z_h=v_h-L$. Since $v_h=g$ on $\partial \Omega_h$, we have $z_h \geq 0$ on $\partial \Omega_h$. 

Now let $x \in \Omega$ and $x_h \in \Omega_h$ such that $x_h \to x$. Since $v_h$ converges to $v$ uniformly on compact subsets of $\Omega$, $z_h$ converges to $z$ uniformly on compact subsets of $\Omega$ and thus $z_h(x_h) \to v(x)-L(x) \equiv z(x)$. Assume that $z(x) <0$. % and 
%Put $z=v-L$.Since $v=g$ on $\partial \Omega$, we have
% $z \geq 0$ on $\partial \Omega$. If  $z \geq 0$ on $\Omega$ we obtain $\lim_{x \to \zeta} v(x) \geq g(\zeta)$. 
%let $\widetilde{\Omega} \subset \Omega$ be a convex set such that $z<0$ on $\widetilde{\Omega}$. We may assume that $\widetilde{\Omega} \cap \partial \Omega = \emptyset$ by considering a ball centered at $x$ and contained in $\widetilde{\Omega}$. Let us denote by $x_h$, using an abuse of notation, a sequence contained in  $\widetilde{\Omega} \cap \Omega_h$ such that $x_h \to x$. 
By the discrete Aleksandrov's maximum principle Lemma \ref{d-stab}
% \cite[Proposition 6.15]{GutierrezNguyen07} 
 applied to $z_h$ on $\Omega$ we have
\begin{align*}
\begin{split} %\label{alex-applied}
(-z_h(x_h))^d %&\leq C d(x_h, \partial \widetilde{\Omega}) (\diam(\widetilde{\Omega}))^{d-1}M_h[v_h](\widetilde{\Omega} ) \\
 & \leq C d(x_h, \partial \Omega) (\diam(\Omega))^{d-1}M_h[v_h](\Omega) \\
 & \leq C d(x_h, \partial \Omega) M_h[v_h](\Omega) \leq C ||x_h-\zeta|| M_h[v_h](\Omega).
 \end{split}
\end{align*}
By the assumption on the Monge-Amp\`ere masses $M_h[v_h](\Omega) \leq C$ with $C$ independent of $h$. Then
\begin{equation} \label{alex-applied}
(-z_h(x_h))^d \leq C ||x_h-\zeta||.
\end{equation}
Taking the limit as $h_k \to 0$ in \eqref{alex-applied}, we obtain for each $x \in \Omega$ for which $z(x) <0$
\begin{align*}
(-z(x))^d &\leq C ||x-\zeta||.
\end{align*}
In summary
$$
\text{either} \ z(x) \geq - C ||x-\zeta||^{\frac{1}{d}} \, \text{or} \, z(x) \geq 0, x \in \Omega.
$$
We conclude that
$$
v(x) \geq L(x) -C ||x-\zeta||^{\frac{1}{d}} \, \text{on} \, \Omega.
$$
Taking the limit as $x \to \zeta$ we obtain $\lim_{x \to \zeta} v(x) \geq g(\zeta)$.

Next, we prove that $\lim_{x \to \zeta} v(x) \leq g(\zeta)$. Since $v_h \in \mathcal{C}_h$, we have $\Delta_h v_h \geq 0$.  
Let $w_h$ denote the solution of the problem $\Delta_h w_h=0$ on $\Omega_h$ with $w_h=g$ on $\partial \Omega_h$. We have
$\Delta_h (v_h -w_h) \geq 0$ on $\Omega_h$ with $v_h-w_h=0$ on $\partial \Omega_h$. By the discrete maximum principle for the discrete Laplacian \cite[Theorem 4.77]{Hackbusch2010}, we have
$v_h-w^{}_h \leq 0$ on $\Omega_h$. 

Since a convex domain is Lipschitz, we can apply the results of \cite[section 6.2 ]{del2018convergence} and claim that 
 $w_h$ converges uniformly on compact subsets to the unique viscosity solution of $\Delta w =0$ on $\Omega$ with $w=g$ on
$\partial \Omega$. We then obtain $v(x) \leq w(x)$ on $\Omega$. But $w \in C(\tir{\Omega})$ \cite{del2018convergence}. We conclude that  $\lim_{x \to \zeta} v(x) \leq g(\zeta)$. Thus
$v \in C(\tir{\Omega})$ and $v=g$ on $\partial \Omega$.
\end{proof}

\section{Application to discretizations of the Monge-Amp\`ere equation} \label{cvg}

\subsection{Viscosity solutions of the elliptic Monge-Amp\`ere equation} \label{visc}

For given $f \geq 0$ continuous on $\tir{\Omega}$, we consider the Monge-Amp\`ere equation
\begin{align} \label{m1}
\begin{split}
\det D^2 u & = f \, \text{in} \, \Omega \\
u & = g \, \text{on} \, \partial \Omega.
\end{split}
\end{align}

%\begin{defn}
A convex function $u \in C(\tir{\Omega})$ is a viscosity solution of \eqref{m1} if $u  = g \, \text{on} \, \partial \Omega$ and for all $\phi \in C^2(\Omega)$ the following holds
\begin{itemize}
\item[-] at each local maximum point $x_0$ of $u-\phi$, $f(x_0) \leq \det D^2 \phi(x_0)$
\item[-] at each local minimum point $x_0$ of $u-\phi$, $f(x_0) \geq \det D^2 \phi(x_0)$, if $D^2 \phi(x_0) \geq 0$, i.e. $D^2 \phi(x_0)$ has positive eigenvalues.
\end{itemize}
%\end{defn}

As explained in \cite{Ishii1990}, the requirement $D^2 \phi(x_0) \geq 0$ in the second condition above is natural for the two dimensional case we consider. % for it to hold. 
The space of test functions in the definition above can be restricted to the space of strictly convex quadratic polynomials \cite[Remark 1.3.3]{Guti'errez2001}.

An upper semi-continuous convex function $u$ is said to be a viscosity sub solution of $\det D^2 u(x) = f(x)$ if the first condition holds and a lower semi-continuous convex function is said to be a viscosity super solution when the second holds. A  viscosity solution of \eqref{m1} is a continuous function which satisfies the boundary condition and is both a 
viscosity sub solution and a viscosity super solution.

Note that the notion of viscosity solution is a pointwise notion. It is not very difficult to prove that if $u$ 
is $C^2$ at $x_0$, then $u$ is a viscosity solution at the point $x_0$ of $\det D^2 u  = f $. 

For further reference, we recall the comparison principle of sub and super solutions, \cite[Theorem V. 2]{Ishii1990}. 
\begin{thm} \label{comparison}
Let $u$ and $v$ be respectively sub and super solutions of $\det D^2 u(x)=f(x)$ in $\Omega$.
%$$u^* = \limsup_{y \to x, y \in \Omega} u(y) \, \text{and} \, v_* = \liminf_{y \to x, y \in \Omega} v(y).$$
Then if $\sup_{x \in \partial \Omega} \max(u(x) - v(x),0)=M$, then $u(x) - v(x) \leq M$ in $\Omega$.
\end{thm}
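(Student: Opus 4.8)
\emph{Proof proposal.} This is the comparison principle of Ishii, and the plan is to prove it by the doubling-of-variables method of the theory of viscosity solutions, the one real subtlety being the degeneracy of the Monge-Amp\`ere operator. Assume for contradiction that $\sup_{\Omega}(u-v) > M$. Since $u$ is upper semicontinuous, $v$ is lower semicontinuous, and $\sup_{\partial \Omega}\max(u-v,0)=M$, the positive supremum of $u-v$ on $\tir{\Omega}$ is attained at an interior point. The first key step is to replace $u$ by the strict subsolution $u^\delta := u + \frac{\delta}{2}||x||^2$ for small $\delta>0$. As $u^\delta$ is still convex and differs from $u$ by a bounded term, $\sup_{\Omega}(u^\delta-v)$ still exceeds the boundary quantity $\sup_{\partial \Omega}\max(u^\delta-v,0) \leq M + C\delta$ once $\delta$ is small enough, so the interior maximum persists. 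Using the superadditivity $(\det(A+B))^{1/d} \geq (\det A)^{1/d} + (\det B)^{1/d}$ valid for symmetric positive semidefinite $A,B$ (concavity of $(\det)^{1/d}$ on the cone of nonnegative matrices), I would verify that $u^\delta$ satisfies the strict subsolution inequality $(\det D^2\phi(x_0))^{1/d} \geq f(x_0)^{1/d} + \delta$ at each contact from above, where convexity of $u$ forces the relevant Hessians to be positive semidefinite.

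Next I would double the variables, maximizing
$$
\Phi_\epsilon(x,y) = u^\delta(x) - v(y) - \frac{1}{2\epsilon}||x-y||^2
$$
over $\tir{\Omega}\times\tir{\Omega}$ at a point $(x_\epsilon,y_\epsilon)$. The standard penalization estimates give $||x_\epsilon-y_\epsilon||^2/\epsilon \to 0$ and $x_\epsilon,y_\epsilon \to \hat{x}$, the interior maximizer of $u^\delta-v$; in particular $x_\epsilon,y_\epsilon$ lie in $\Omega$ for $\epsilon$ small. The Crandall--Ishii lemma (the theorem on sums) then produces symmetric matrices $X,Y$ with $(p_\epsilon,X) \in \overline{J}^{2,+}u^\delta(x_\epsilon)$ and $(p_\epsilon,Y) \in \overline{J}^{2,-}v(y_\epsilon)$, where $p_\epsilon = (x_\epsilon-y_\epsilon)/\epsilon$, together with
$$
\begin{pmatrix} X & 0 \\ 0 & -Y \end{pmatrix} \leq \frac{3}{\epsilon}\begin{pmatrix} I & -I \\ -I & I \end{pmatrix},
$$
whose action on vectors of the form $(\xi,\xi)$ yields $X \leq Y$.

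Finally I would exploit convexity together with monotonicity of the determinant. Since $u^\delta$ is convex, every second order superjet matrix is positive semidefinite, so $X \geq 0$; hence $0 \leq X \leq Y$, and in particular $Y \geq 0$, which is precisely the regime in which the supersolution condition is active. Applying the strict subsolution inequality at $x_\epsilon$ and the supersolution inequality at $y_\epsilon$, together with the monotonicity $0 \leq X \leq Y \Rightarrow (\det X)^{1/d} \leq (\det Y)^{1/d}$, gives the chain
$$
f(x_\epsilon)^{1/d} + \delta \leq (\det X)^{1/d} \leq (\det Y)^{1/d} \leq f(y_\epsilon)^{1/d}.
$$
Letting $\epsilon \to 0$ and using $x_\epsilon,y_\epsilon \to \hat{x}$ with $f$ continuous yields $f(\hat{x})^{1/d} + \delta \leq f(\hat{x})^{1/d}$, i.e. $\delta \leq 0$, a contradiction. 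I expect the genuine difficulty to be exactly this degeneracy of the Monge-Amp\`ere operator: the naive comparison without the $\delta$-perturbation produces only $f(x_\epsilon) \leq f(y_\epsilon)$, which degenerates to an equality in the limit and yields no contradiction. Breaking this degeneracy through the perturbation to a strict subsolution, via the concavity of $(\det)^{1/d}$ rather than any extra regularity of the domain or the solutions, is the crux; checking that the perturbation truly produces a strict subsolution and that the doubled maximum remains interior are the points needing the most care.
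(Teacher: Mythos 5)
The first thing to say is that the paper contains no proof of this statement: Theorem \ref{comparison} is quoted verbatim from Ishii--Lions \cite[Theorem V.2]{Ishii1990} and used as a black box, so there is no internal argument to compare yours against. Your proposal is, in essence, a faithful reconstruction of the standard comparison argument for the convex Monge--Amp\`ere operator, and its key ideas are all correct: perturbing $u$ to $u^\delta = u + \frac{\delta}{2}\|x\|^2$ and using Minkowski's determinant inequality $(\det(A+B))^{1/d}\geq(\det A)^{1/d}+(\det B)^{1/d}$ on positive semidefinite matrices to obtain a \emph{strict} subsolution; noting that convexity of $u$ forces every superjet matrix to be positive semidefinite, so that $0\leq X\leq Y$ after the theorem on sums; and then using monotonicity of $\det$ on the positive semidefinite cone to reach $f(x_\epsilon)^{1/d}+\delta\leq f(y_\epsilon)^{1/d}$ and a contradiction. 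You have also correctly identified the degeneracy of $\det$ as the crux and the $\delta$-perturbation as the remedy; this is exactly how the cited reference handles it.

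One step deserves more care than you give it: the application of the supersolution inequality to $(p_\epsilon,Y)\in\overline{J}^{2,-}v(y_\epsilon)$. The paper's definition of supersolution only constrains test functions whose Hessian at the contact point is positive semidefinite, and an element of the \emph{closure} $\overline{J}^{2,-}v(y_\epsilon)$ with $Y\geq 0$ is a limit of subjet elements $(p_n,Y_n)\in J^{2,-}v(y_n)$ for which $Y_n$ need not be positive semidefinite; for such $Y_n$ the supersolution condition says nothing, and the limit passage you invoke would fail if all you knew were $Y\geq 0$. The gap closes inside your own setup: since $u^\delta=u+\frac{\delta}{2}\|x\|^2$ with $u$ convex, every matrix in $\overline{J}^{2,+}u^\delta(x_\epsilon)$ satisfies $X\geq\delta I$, hence $Y\geq\delta I>0$, hence the approximating $Y_n$ are positive definite for $n$ large and the supersolution inequality does apply to them before passing to the limit. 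You should make this explicit (or, equivalently, work with the monotone concave extension $\frac{1}{d}\inf\{\operatorname{tr}(BZ):B\geq 0,\ \det B=1\}$ of $(\det Z)^{1/d}$, as Ishii and Lions do). With that point spelled out, the argument is complete.
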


There are very few references which give an existence and uniqueness result for \eqref{m1} in the degenerate case $f \geq 0$. In \cite{Ishii1990} it is required that one can find a sub solution and a super solution. The difficulty is that the Monge-Amp\`ere equation is not often studied in convex but not necessarily strictly convex domains with the notion of viscosity solution. 

\subsection{Aleksandrov solutions and equivalence with viscosity solutions}

We recall that a convex function $u \in  C(\Omega)$  is an Aleksandrov solution of $\det D^2 u=f$ when its Monge-Amp\`ere measure is equal to the measure with density $f$. Aleksandrov solutions on convex but not necessarily strictly convex domains are studied in \cite{Hartenstine2006}.

We assume in addition that $f >0$.
%Since $f \in C(\tir{\Omega})$ it follows that there exists a constant $c_0 >0$ such that $$f \geq c_0 > 0.$$
We also assume that $g$ can be extended to a convex function $\tilde{g} \in C(\tir{\Omega})$. Then by \cite[Theorem 1.1]{Hartenstine2006}, \eqref{m1} has a unique Aleksandrov solution. The existence and uniqueness of a viscosity solution to \eqref{m1} in  $C(\tir{\Omega})$ then follows from the equivalence of viscosity and Aleksandrov solutions \cite[ Propositions 1.3.4 and 1.7.1]{Guti'errez2001}, under these assumptions. %Nethertheless, if $f\geq 0$ and it is known that \eqref{m1} has a unique viscosity solution with finite Monge-Amp\`ere measure, Theorem \ref{singular} below holds.
The equivalence of viscosity and Aleksandrov solutions in the degenerate case $f \geq 0$ is discussed in \cite{AwanouAwi22}.

\subsection{A reformulation of convexity} \label{ref-conv}
We recall that a function $\phi \in C^2(\Omega)$ is convex on $\Omega$ if the Hessian matrix $D^2 \phi$ is positive semidefinite or $\lambda_1[\phi] \geq 0$ where $\lambda_1[\phi]$ denotes the smallest eigenvalue of $D^2 \phi$. This notion was extended to continuous functions in \cite{Oberman07}. See also the remarks on  \cite[p. 226 ]{Trudinger97b}. 
An upper semi-continuous function $u$ is convex in the viscosity sense if and only if it is a viscosity solution of $-\lambda_1[u] \leq 0$, that is, for all $\phi \in C^2(\Omega)$, whenever $x_0$ is a local maximum point of $u-\phi$, $-\lambda_1[\phi] \leq 0$. %A convex function $u$ coincides with its convex envelope and hence solves the equation 
This can also be written $\text{max}(-\lambda_1[u] ,0) = 0 \, \text{in} \, \Omega$, c.f. \cite{Oberman07}.

The Dirichlet problem for the Monge-Amp\`ere equation \eqref{m1} can then be written
\begin{align} \label{m11}
\begin{split}
-\det D^2 u + f & = 0 \, \text{in} \, \Omega \\
\text{max}(-\lambda_1[u] ,0) &= 0 \, \text{in} \, \Omega, \\
%u  - g & = \, \text{on} \, \partial \Omega.
\end{split}
\end{align}
with boundary condition $u=g$ on $\partial \Omega$.
We write \eqref{m11} as $F(u)=0$ and 
note that the form of the equation is chosen to be consistent with the definition of ellipticity used for example in \cite{Ishii1990}.

Since we have now rewritten in \eqref{m11} convexity as an additional equation, sub solutions and super solutions of $-\det D^2 u + f  = 0$ do not need to be convex. We have the following comparison principle for \eqref{m11} \cite[Example 2.1 and Corollary 7.1]{BardiDragoni}: let $u^*$ be an upper semi-continuous sub solution of $-\det D^2 u + f  = 0$ which is convex in the viscosity sense and let $u_*$ be a lower semi-continuous super solution of $-\det D^2 u + f  = 0$ (which is not necessarily convex). Then
\begin{equation} \label{comparison-principle}
\sup_{\Omega} (u^*-u_*) \leq \max_{\partial \Omega} (u^*-v_*).
\end{equation}
A viscosity solution of \eqref{m11} is also a viscosity solution as defined in section \ref{visc}, since an upper semi-continuous function which is convex in the viscosity sense is also convex \cite[Example 2.1 and Theorem 3.1]{BardiDragoni}.

\subsection{Monotone schemes} \label{mono}

Let us denote by $F_h(v_h) \equiv \hat{F}_h(v_h(x), v_h(y)|_{ y \neq x})$ a discretization of $F(v)$. The scheme $F_h(v_h) =0$ is said to be monotone if for $z_h$ and $w_h$ in  $\mathcal{U}_h$, $z_h(y) \geq w_h(y), y \neq x$ implies $\hat{F}_h(z_h(x), z_h(y)|_{ y \neq x}) \geq \hat{F}_h(z_h(x), w_h(y)|_{ y \neq x})$. Here we use the partial ordering of $\R^d$, $(a_1,\ldots,a_d) \geq (b_1,\ldots,b_d)$ if and only if $a_i \geq b_i$ for all $i$.

The scheme is said to be consistent if for all $C^2$ functions $\phi$, and a sequence $x_h \to x \in \Omega$,
$\lim_{h \to 0} F_h (r_h(\phi)) (x_h) = F(\phi)(x)$. % where for $x \in \Omega$, the limit on the left is interpreted as the infimum over $F_h (r_h(\phi)(x_h))$ for a family of mesh points $x_h$ which accumulates to $x$.

Finally the scheme is said to be stable if $F_h(v_h)=0$ has a solution $v_h$ which is bounded independently of $h$.

 Note that the convexity assumption on the exact solution is enforced through the definition of $F(v)$. In particular, if $F_h(v_h) =0$, then $v_h$ is discrete convex, i.e. $\lambda_{1,h}[v_h] \geq 0$ or equivalently  $\text{max}(-\lambda_{1,h}[v_h] ,0) = 0$. We consider the discretization

\begin{align} \label{m11mh2}
\begin{split}
F_h(u_h) & =0, \ \text{in} \ \Omega_h \\
u_h & = g\, \text{on} \, \partial \Omega_h. 
\end{split}
\end{align}

We make the assumption that the discretization is consistent, stable and monotone. In particular the half-relaxed limits
\begin{align*}
u^*(x) = \limsup_{y \to x, h \to 0} u_h(y) & = \lim_{\delta \to 0} \sup\{ \,u_h(y), y \in \Omega_h, |y-x| \leq \delta, 0<h\leq \delta \, \} \\
 u_*(x) = \liminf_{y \to x, h \to 0} u_h(y) & = \lim_{\delta \to 0} \inf \{ \,u_h(y), y \in \Omega_h, |y-x| \leq \delta, 0<h\leq \delta \, \},
\end{align*}
are then well defined.

It follows from \cite{Barles1991,nochetto2018two,DiscreteAlex2} that a consistent, stable and monotone scheme has a solution $v_h$.

\subsection{Convergence }

We close this section by stating the main application of the result of this paper.

\begin{thm} \label{singular} Let $f>0$ and $f\in C(\tir{\Omega})$. Assume that $g$ can be extended to a  convex function $\tilde{g} \in C(\tir{\Omega})$.
Let $F_h(u_h)=0$ be a  consistent, stable and monotone scheme for \eqref{m1} with $u_h$ solving \eqref{m11mh2} and
$M_h[u_h](\Omega) \leq C$ for a constant $C$ independent of $h$. 
Assume that the upper half-relaxed limit $u^*$ is a viscosity sub solution of $\det D^2 u(x)=f(x)$  and the lower half-relaxed limit $u_*$ is a viscosity super solution of $\det D^2 u(x)=f(x)$ in $\Omega$. Then
$u^* = g = u_*$ on $\partial \Omega$. Moreover, solutions $u_h$ of \eqref{m11mh2} converge uniformly on compact subsets to the unique viscosity solution of \eqref{m11}.
\end{thm}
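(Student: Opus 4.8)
The plan is to combine the compactness and boundary regularity furnished by Theorem \ref{main2} with the half-relaxed limit machinery, and then to close the argument by uniqueness. First I would check that the hypotheses of Theorem \ref{main2} are in force: since $F_h(u_h)=0$ forces $\max(-\lambda_{1,h}[u_h],0)=0$, each $u_h$ lies in $\mathcal{C}_h$; stability gives the uniform bound on $u_h$; the Monge-Amp\`ere masses are bounded by hypothesis; and \eqref{m11mh2} provides $u_h=g$ on $\partial\Omega_h$. Because the proof of Theorem \ref{main2} rests on Arzela--Ascoli, it applies to an arbitrary sequence $h_k\to 0$: every such sequence admits a subsequence along which $I(u_{h_k})$ converges uniformly on compact subsets to a convex function $v\in C(\tir\Omega)$ with $v=g$ on $\partial\Omega$.

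Next I would identify this limit $v$ as a viscosity solution of \eqref{m11}. Convexity of $v$, hence convexity in the viscosity sense, is exactly Lemma \ref{viscosity-sense}, and the boundary condition is built into Theorem \ref{main2}. For the equation itself, I note that along the chosen subsequence the uniform convergence together with the continuity of $v$ forces the two half-relaxed limits to coincide with $v$ on $\Omega$; the consistency and monotonicity of the scheme --- the very properties underlying the assumed sub/supersolution statements for $u^*$ and $u_*$ --- then make $v$ simultaneously a viscosity subsolution and a viscosity supersolution of $\det D^2u=f$. Thus $v$ is a viscosity solution of \eqref{m11}, and, as recorded in section \ref{ref-conv}, therefore a viscosity solution of \eqref{m1}.

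I would then invoke uniqueness. Under $f>0$ and the hypothesis on $g$, the problem \eqref{m1} has a unique viscosity solution by the equivalence with Aleksandrov solutions and the comparison principle (Theorem \ref{comparison}). Consequently every subsequential limit coincides with this solution $u$, and the standard subsequence argument upgrades this to convergence of the full family $I(u_h)$ to $u$, uniformly on compact subsets. Finally, for the boundary statement $u^*=g=u_*$ on $\partial\Omega$, I would reproduce the two one-sided estimates from the proof of Theorem \ref{main2} at the level of the relaxed limits: applying the discrete Aleksandrov maximum principle (Lemma \ref{d-stab}) to $u_h-L$ for affine $L\le g$ with $L(\zeta)\ge g(\zeta)-\epsilon$ yields $u_*\ge g$ on $\partial\Omega$, while comparing $u_h$ with the discrete-harmonic $w_h$ and passing to the limit yields $u^*\le g$; since $u_*\le u^*$ always, equality with $g$ follows.

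The step I expect to be the main obstacle is the transfer of the viscosity sub/supersolution property to the subsequential limit $v$: one must reconcile the half-relaxed limits of the subsequence with the full-family hypotheses, and handle, as flagged in section \ref{visc}, the delicate restriction $D^2\phi(x_0)\ge 0$ in the supersolution test for the degenerate Monge-Amp\`ere operator. The boundary identification is also delicate, since it requires the Aleksandrov and discrete-Laplacian estimates to survive the passage to relaxed limits on a merely convex, possibly non-strictly-convex, domain.
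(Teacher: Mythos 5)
Your proposal is correct and follows essentially the same route as the paper: verify the hypotheses of Theorem \ref{main2} to extract a subsequential limit $v$ that is convex, continuous up to the boundary and equal to $g$ there, identify $v$ with the half-relaxed limits $u^*$ and $u_*$ so that the assumed sub/supersolution properties make $v$ a viscosity solution, and conclude by the comparison principle (Theorem \ref{comparison}) that all subsequential limits coincide, giving convergence of the full family. The only divergence is cosmetic: the paper obtains $u^*=g=u_*$ on $\partial\Omega$ directly from $v=u^*=u_*$ together with $v=g$ on $\partial\Omega$, rather than re-running the Aleksandrov and discrete-harmonic boundary estimates at the level of the relaxed limits as you suggest, and your flagged worry about the test-function restriction $D^2\phi(x_0)\ge 0$ does not arise since the sub/supersolution properties of $u^*$ and $u_*$ are hypotheses of the theorem.
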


\begin{proof} %We prove that for $\zeta \in \partial \Omega$, $\lim_{x \to \zeta} u_*(x) \geq g(\zeta)$ and $\lim_{x \to \zeta} u^*(x) \leq g(\zeta)$. 
Since $u_h$ is uniformly bounded on $\Omega$ and discrete convex, by Theorem \ref{main2}, there exists a subsequence $u_{h_k}$ which converges uniformly on compact subsets of $\Omega$ to a convex function $v$ in $C(\tir{\Omega})$ which satisfies $v=g$ on $\partial \Omega$. 

It follows from the definitions that $v=u^*  = u_*$ on $\Omega$ and hence $v$ is a viscosity solution of $\det D^2 u=f$. %As explained in section \ref{visc}, $v$ is also an Aleksandrov solution of the equation. 
By the comparison principle Theorem \ref{comparison}, $v$ is equal to the unique viscosity solution of \eqref{m1}. Thus all subsequences $u_{h_k}$ converge uniformly on compact subsets to the same limit. This concludes the proof.
\end{proof}

Several discrete Monge-Amp\`ere equations $F_h(u_h)=0$, e.g. \cite{Mirebeau14,Mirebeau15,DiscreteAlex2}, can be written
as
$$
\mathcal{M}_h[u_h](x) = h^d f(x), x \in \Omega_h, 
$$
for some operator $\mathcal{M}_h$ which satisfies
$$
M_h[u_h] \leq C \mathcal{M}_h[u_h].
$$
For $f \in L^1(\Omega), \sum_{x \in \Omega_h}  h^d f(x) \to \int_{\Omega} f(t) dt$ and thus 
$ \sum_{x \in \Omega_h} M_h[u_h](x) \leq  \sum_{x \in \Omega_h}  h^d f(x) \leq C$ for a constant $C$ independent of $h$, i.e. solutions of the discrete Monge-Amp\`ere equations have Monge-Amp\`ere masses uniformly bounded.
%\begin{rem}\end{rem}

\section{Proof of Lemma \ref{viscosity-sense}} \label{proofs}

%\begin{proof}
Since a function convex in the viscosity sense is convex, see for example \cite[Proposition 4.1]{Lindqvist00}, it is enough to show that the limit function $v$ is convex in the viscosity sense. We use the approach in \cite{Bouchard2009}.

By definition $I(v_h)$ is continuous on $\Omega$ and the convergence to $v$ is uniform on compact subsets of $\Omega$. Hence $v \in C(\Omega)$. 

Let $x_0 \in \Omega$ and $\phi \in C^2(\Omega)$ such that $v-\phi$ has a local maximum at $x_0$ with $(v-\phi)(x_0)=0$. Without loss of generality, we may assume that $x_0$ is a strict local maximum.

Let $B_0$ denote a closed ball contained in $\Omega$ and containing $x_0$ in its interior. We let $x_l$ be a sequence in $B_0 \cap \Omega_h$ such that $x_l \to x_0$ and $v_{h_l}(x_l) \to v(x_0)$ and let $x'_l$ be defined by
$$
c_l  \coloneqq  (v_{h_l} - \phi)(x'_l) = \max_{B_0\cap \Omega_h} (v_{h_l} - \phi).
$$
Since the sequence $x'_l$ is bounded, it converges to some $x_1$ after possibly passing to a subsequence. Since $(v_{h_l}-\phi)(x'_l) \geq (v_{h_l}-\phi)(x_l)$ we have
$$
(v-\phi)(x_0) = \lim_{l \to \infty} (v_{h_l}-\phi)(x_l) \leq \limsup_{l \to \infty} (v_{h_{l}}-\phi)(x_l')=  \limsup_{l \to \infty} c_l \leq (v-\phi)(x_1).
$$
Since $x_0$ is a strict maximizer of the difference $v-\phi$, we conclude that $x_0=x_1$ and $c_l \to 0$ as $l \to \infty$.

By definition
\begin{equation} \label{order}
v_{h_l}(x) \leq \phi(x) + c_l, \forall x \in B_0.
\end{equation}

We recall that for $v_h \in \mathcal{C}_h$, $-\lambda_{1,h}[v_h] \leq 0$. Now, the operator $\lambda_{1,h}[v_h]$ is easily seen to be monotone. In addition it is consistent. Put $F_h[v_h](x_0)=\lambda_{1,h}[v_h](x_0)=\hat{F}_{h}(v_{h}(x_0), v_{h}(y)|_{y \neq x_0})$. 

By the monotonicity of the scheme we obtain from \eqref{order}
$$
0 \leq \hat{F}_{h_l}(v_{h_l}(x_0), v_{h_l}(y)|_{y \neq x_0}) \leq \hat{F}_{h_l}(v_{h_l}(x_0), (\phi(y) +c_l)|_{y \neq x_0}), 
$$
which gives by the consistency of the scheme $\lambda_1[\phi](x_0) \geq 0$. This concludes the proof.

%\end{proof}

\section*{Acknowledgments}

The author was partially supported by NSF grants DMS-1319640 and  DMS-1720276. The author is grateful to Michael Neilan for pointing out the work of reference \cite{nochetto2018two} on the Dirichlet boundary condition of a uniform limit. The author would like to thank the Isaac Newton Institute for Mathematical Sciences, Cambridge, for support and hospitality during the programme ''Geometry, compatibility and structure preservation in computational differential equations'' where part of this work was undertaken. Part of this work was supported by EPSRC grant no EP/K032208/1.

%\newpage

%\section{Numerical results}

%\newpage

%\begin{rem} As pointed out in the introduction,  \end{rem}

%\bibliographystyle{siam} 
%\bibliography{/Users/gerard/Desktop/MathHome4/Monge/SplineMonge08}

\begin{thebibliography}{10}

\bibitem{Aguilera2008}
{\sc N.~E. Aguilera and P.~Morin}, {\em Approximating optimization problems
  over convex functions}, Numer. Math., 111 (2008), pp.~1--34.

\bibitem{DiscreteAlex2}
{\sc G.~Awanou}, {\em Discrete {A}leksandrov solutions of the
  {M}onge-{A}mp\`ere equation}.
\newblock { h}ttps://arxiv.org/abs/1408.1729.

\bibitem{awanou2019uweakcvg}
{\sc G.~Awanou}, {\em Weak convergence of {M}onge-{A}mp\`ere measures for
  discrete convex mesh functions},  (2019).
\newblock https://arxiv.org/abs/1910.13870.

\bibitem{AwanouAwi22}
{\sc G.~Awanou and R.~Awi}, {\em Convergence of finite difference schemes to
  the {A}leksandrov solution of the {M}onge-{A}mp{\`e}re equation}, Acta
  Applicandae Mathematicae, 144 (2016), pp.~87--98.

\bibitem{Baginski96}
{\sc F.~E. Baginski and N.~Whitaker}, {\em Numerical solutions of boundary
  value problems for $k$-surfaces in $\mathbb{R}^3$}, Numer. Methods Partial
  Differential Equations, 12 (1996), pp.~525--546.

\bibitem{BardiDragoni}
{\sc M.~Bardi and F.~Dragoni}, {\em Convexity and semiconvexity along vector
  fields}, Calc. Var. Partial Differential Equations, 42 (2011), pp.~405--427.

\bibitem{Barles1991}
{\sc G.~Barles and P.~E. Souganidis}, {\em Convergence of approximation schemes
  for fully nonlinear second order equations}, Asymptotic Anal., 4 (1991),
  pp.~271--283.

\bibitem{Mirebeau14}
{\sc J.-D. Benamou, F.~Collino, and J.-M. Mirebeau}, {\em Monotone and
  consistent discretization of the {M}onge-{A}mp\`ere operator}, Math. Comp.,
  85 (2016), pp.~2743--2775.

\bibitem{Bohmer2008}
{\sc K.~B{\"o}hmer}, {\em On finite element methods for fully nonlinear
  elliptic equations of second order}, SIAM J. Numer. Anal., 46 (2008),
  pp.~1212--1249.

\bibitem{Bouchard2009}
{\sc B.~Bouchard, R.~Elie, and N.~Touzi}, {\em Discrete-time approximation of
  {BSDE}s and probabilistic schemes for fully nonlinear {PDE}s}, in Advanced
  financial modelling, vol.~8 of Radon Ser. Comput. Appl. Math., Walter de
  Gruyter, Berlin, 2009, pp.~91--124.

\bibitem{Belgacem2006}
{\sc M.~Bouchiba and F.~B. Belgacem}, {\em Numerical solution of
  {M}onge-{A}mp\`ere equation}, Math. Balkanica (N.S.), 20 (2006),
  pp.~369--378.

\bibitem{Brenner2010b}
{\sc S.~C. Brenner, T.~Gudi, M.~Neilan, and L.-Y. Sung}, {\em {$C^0$} penalty
  methods for the fully nonlinear {M}onge-{A}mp\`ere equation}, Math. Comp., 80
  (2011), pp.~1979--1995.

\bibitem{Brix}
{\sc K.~Brix, Y.~Hafizogullari, and A.~Platen}, {\em Solving the
  {M}onge--{A}mp\`ere equations for the inverse reflector problem}, Math.
  Models Methods Appl. Sci., 25 (2015), pp.~803--837.

\bibitem{Glowinski2014}
{\sc A.~Caboussat, R.~Glowinski, and D.~C. Sorensen}, {\em A least-squares
  method for the numerical solution of the {D}irichlet problem for the elliptic
  {M}onge-{A}mp\`ere equation in dimension two}, ESAIM Control Optim. Calc.
  Var., 19 (2013), pp.~780--810.

\bibitem{Chen2010b}
{\sc Y.~Chen and S.~R. Fulton}, {\em An adaptive continuation-multigrid method
  for the balanced vortex model}, J. Comput. Phys., 229 (2010), pp.~2236--2248.

\bibitem{daniel2014quelques}
{\sc J.-P. Daniel}, {\em Quelques r{\'e}sultats d'approximation et de
  r{\'e}gularit{\'e} pour des {\'e}quations elliptiques et paraboliques
  non-lin{\'e}aires}, PhD thesis, Paris 6, 2014.

\bibitem{Davydov12}
{\sc O.~Davydov and A.~Saeed}, {\em Numerical solution of fully nonlinear
  elliptic equations by {B}\"ohmer's method}, J. Comput. Appl. Math., 254
  (2013), pp.~43--54.

\bibitem{del2018convergence}
{\sc F.~del Teso, J.~J. Manfredi, and M.~Parviainen}, {\em Convergence of
  dynamic programming principles for the $ p $-laplacian}, arXiv preprint
  arXiv:1808.10154,  (2018).

\bibitem{Delzanno08}
{\sc G.~L. Delzanno, L.~Chac{\'o}n, J.~M. Finn, Y.~Chung, and G.~Lapenta}, {\em
  An optimal robust equidistribution method for two-dimensional grid adaptation
  based on {M}onge-{K}antorovich optimization}, J. Comput. Phys., 227 (2008),
  pp.~9841--9864.

\bibitem{Feng2017}
{\sc X.~Feng and M.~Jensen}, {\em Convergent semi-{L}agrangian methods for the
  {M}onge-{A}mp\`ere equation on unstructured grids}, SIAM J. Numer. Anal., 55
  (2017), pp.~691--712.

\bibitem{FengLewis14}
{\sc X.~Feng and T.~Lewis}, {\em Mixed interior penalty discontinuous
  {G}alerkin methods for fully nonlinear second order elliptic and parabolic
  equations in high dimensions}, Numer. Methods Partial Differential Equations,
  30 (2014), pp.~1538--1557.

\bibitem{Feng2009}
{\sc X.~Feng and M.~Neilan}, {\em Analysis of {G}alerkin methods for the fully
  nonlinear {M}onge-{A}mp\`ere equation}, J. Sci. Comput., 47 (2011),
  pp.~303--327.

\bibitem{Oberman2010a}
{\sc B.~Froese and A.~Oberman}, {\em Convergent finite difference solvers for
  viscosity solutions of the elliptic {M}onge-{A}mp\`ere equation in dimensions
  two and higher}, SIAM J. Numer. Anal., 49 (2011), pp.~1692--1714.

\bibitem{GlowinskiICIAM07}
{\sc R.~Glowinski}, {\em Numerical methods for fully nonlinear elliptic
  equations}, in I{CIAM} 07---6th {I}nternational {C}ongress on {I}ndustrial
  and {A}pplied {M}athematics, Eur. Math. Soc., Z\"urich, 2009, pp.~155--192.

\bibitem{Guti'errez2001}
{\sc C.~E. Guti{\'e}rrez}, {\em The {M}onge-{A}mp\`ere equation}, Progress in
  Nonlinear Differential Equations and their Applications, 44, Birkh\"auser
  Boston Inc., Boston, MA, 2001.

\bibitem{Hackbusch2010}
{\sc W.~Hackbusch}, {\em Elliptic differential equations}, vol.~18 of Springer
  Series in Computational Mathematics, Springer-Verlag, Berlin, english~ed.,
  2010.
\newblock Theory and numerical treatment, Translated from the 1986 corrected
  German edition by Regine Fadiman and Patrick D. F. Ion.

\bibitem{Hartenstine2006}
{\sc D.~Hartenstine}, {\em The {D}irichlet problem for the {M}onge-{A}mp\`ere
  equation in convex (but not strictly convex) domains}, Electron. J.
  Differential Equations,  (2006), pp.~No. 138, 9 pp. (electronic).

\bibitem{Ishii1990}
{\sc H.~Ishii and P.-L. Lions}, {\em Viscosity solutions of fully nonlinear
  second-order elliptic partial differential equations}, J. Differential
  Equations, 83 (1990), pp.~26--78.

\bibitem{jensen2017numerical}
{\sc M.~Jensen}, {\em Numerical solution of the simple {M}onge-{A}mp\`ere
  equation with non-convex {D}irichlet data on non-convex domains}, arXiv
  preprint arXiv:1705.04653,  (2017).

\bibitem{Lakkis11b}
{\sc O.~Lakkis and T.~Pryer}, {\em A finite element method for nonlinear
  elliptic problems}, SIAM J. Sci. Comput., 35 (2013), pp.~A2025--A2045.

\bibitem{Lindqvist00}
{\sc P.~Lindqvist, J.~Manfredi, and E.~Saksman}, {\em Superharmonicity of
  nonlinear ground states}, Rev. Mat. Iberoamericana, 16 (2000), pp.~17--28.

\bibitem{Mirebeau15}
{\sc J.-M. Mirebeau}, {\em Discretization of the 3{D} {M}onge-{A}mpere
  operator, between wide stencils and power diagrams}, ESAIM Math. Model.
  Numer. Anal., 49 (2015), pp.~1511--1523.

\bibitem{NeilanSalgadoZhang2017}
{\sc M.~Neilan, A.~J. Salgado, and W.~Zhang}, {\em Numerical analysis of
  strongly nonlinear {PDE}s}, Acta Numer., 26 (2017), pp.~137--303.

\bibitem{neilan2019monge}
{\sc M.~Neilan, A.~J. Salgado, and W.~Zhang}, {\em The {M}onge-{A}mp\`ere
  equation}, arXiv preprint arXiv:1901.05108,  (2019).

\bibitem{nochetto2018two}
{\sc R.~H. Nochetto, D.~Ntogkas, and W.~Zhang}, {\em Two-scale method for the
  {M}onge-{A}mp\`ere equation: convergence to the viscosity solution}, Math.
  Comp., 88 (2019), pp.~637--664.

\bibitem{Oberman07}
{\sc A.~M. Oberman}, {\em The convex envelope is the solution of a nonlinear
  obstacle problem}, Proc. Amer. Math. Soc., 135 (2007), pp.~1689--1694
  (electronic).

\bibitem{Sulman11}
{\sc M.~M. Sulman, J.~F. Williams, and R.~D. Russell}, {\em An efficient
  approach for the numerical solution of the {M}onge-{A}mp\`ere equation},
  Appl. Numer. Math., 61 (2011), pp.~298--307.

\bibitem{Trudinger97b}
{\sc N.~S. Trudinger and X.-J. Wang}, {\em Hessian measures. {I}}, Topol.
  Methods Nonlinear Anal., 10 (1997), pp.~225--239.
\newblock Dedicated to Olga Ladyzhenskaya.

\bibitem{Zheligovsky10}
{\sc V.~Zheligovsky, O.~Podvigina, and U.~Frisch}, {\em The {M}onge-{A}mp\`ere
  equation: various forms and numerical solution}, J. Comput. Phys., 229
  (2010), pp.~5043--5061.

\end{thebibliography}

\end{document}